\theoremstyle{plain}
\newtheorem{theorem}{Theorem}
\newtheorem{lemma}{Lemma}
\theoremstyle{definition}
\newtheorem{definition}{Definition}
\theoremstyle{remark}
\begin{document}

\title{On equivariant fibrations of G-CW-complexes}
\author{Pavel S. Gevorgyan}
\address{Moscow Pedagogical State University}
\email{pgev@yandex.ru}

\author{Rolando Jimenez}
\address{Institute of Mathematics, National Autonomous University of Mexico, Oaxaca, Mexico}
\email{rolando@matcuer.unam.mx}

\begin{abstract}
It is proved that if $G$ is a compact Lie group, then an equivariant Serre fibration of $G$-CW-complexes is an equivariant Hurewicz fibration in the class of compactly generated $G$-spaces. In the nonequivariant setting, this result is due to Steinberger, West and Cauty. The main theorem is proved using the following key result: a $G$-CW-complex can be embedded as an equivariant retract in a simplicial $G$-complex. It is also proved that an equivariant map $p: E \to B$ of $G$-CW-complexes is a Hurewicz $G$-fibration if and only if the $H$-fixed point map $p^H : E^H \to B^H$ is a Hurewicz fibration for any closed subgroup $H$ of $G$. This gives a solution to the problem of James and Segal in the case of $G$-CW-complexes.
\end{abstract}

\keywords{$G$-CW-complex, simplicial $G$-complex, equivariant fibration, $H$-fixed points.}

\subjclass{55R91, 57S05}

\maketitle

\footnotetext[1]{The research of R. Jimenez was partially supported by the Programa de Apoyos para la Superacion del Personal Academico de la Direccion General de Asuntos del Personal Academico de la Universidad Nacional Autonoma de Mexico, the Secretaria de Educacion Publica and the Consejo Nacional en Ciencia y Tecnologia (grant no. 284621).}

\section{Introduction}

Steinberger and West proved in \cite{St-West} that a Serre fibration of CW-complexes is a Hurewicz fibration in the class of compactly generated spaces. This is not true in the class of all topological spaces. Cauty \cite{Cauty-2} found and corrected a mistake in the proof of Steinberger and West. May and Sigurdsson conjectured in \cite{May-Sig} that the Steinberger-West-Cauty theorem is valid in the equivariant setting for compact Lie group actions. This conjecture is solved in the positive in the present paper, by showing that an equivariant Serre fibration of $G$-CW-complexes is an equivariant Hurewicz fibration in the class of compactly generated G-spaces (Theorem \ref{th-rassloenie}). To prove this theorem, an important result is established on the embeddability of a $G$-CW-complex as an equivariant retract in a simplicial $G$-complex (Theorem \ref{th-retract}).

It is known that if  $p:E\to B$ is an equivariant fibration, then the induced map of $H$-fixed points $p^H : E^H \to B^H$ is a fibration in the usual sense for all closed subgroups $H$ of $G$. James and Segal asked in \cite{james-segal} whether this necessary condition is also sufficient. In the case of finite group actions, such a result was obtained by Bredon (see \cite{Bredon}, Ch. III, 4.1). Using the main result in this paper, Theorem \ref{th-rassloenie}, we give a positive solution to the problem of James and Segal \cite{james-segal} in the case when $E$ and $B$ are $G$-CW-complexes. Namely, we prove that an equivariant map$p:E\to B$ is a Hurewicz $G$-fibration if and only if $p^H : E^H \to B^H$ is a Hurewicz fibration for any closed subgroup $H$. In the general case, when $E$ and $B$ are not $G$-CW-complexes, this statement is most likely false.

In this paper we use some results and ideas due to Cauty \cite{Cauty-2} and Illman \cite{Illman-1}.

In what follows we assume that $G$ is a compact Lie group, although the reader will easily notice that some results are true under the weaker assumption that $G$ is a compact topological group.

\section{Embeddings of $G$-CW-complexes in simplicial $G$-complexes}

Simplicial $G$-complexes and $G$-CW-complexes are important objects of equivariant algebraic topology. These spaces are constructed using simpler objects: equivariant simplices and equivariant cells. We recall the construction of these objects following the work \cite{Illman-1} of Illman.

Let $H_0$, $H_1$, \ldots \,, $H_n$ be closed subgroups of $G$ such that $H_0\supset H_1\supset \ldots \supset H_n$. Consider the $G$-space $\Delta_n \times G$ with the action $g(x,g')=(x, gg')$, where $\Delta_n$ is the standard simplex in an $n$-dimensional vector space. Define an equivalence relation $\sim $ on $\Delta_n \times G$ as follows:
\[
(x,g)\sim (x,g') \iff gH_m = g'H_m, \quad x\in \Delta_m\backslash \Delta_{m-1}, 0\leqslant m \leqslant n,
\]
where the embedding $\Delta_m\subset \Delta_n$, $0\leqslant m \leqslant n$, is given by 
$$(x_0, \ldots \,, x_m) \to (x_0, \ldots \,, x_m, 0, \ldots\,, 0).$$

The quotient
\[
\Delta_n(G; H_0,\ldots \,, H_n) = (\Delta_n\times G)|\sim 
\]
is a $G$-space with the action $g[x,g']=[x,gg']$, where $[x,g']$ is the equivalence class of $(x,g')\in \Delta_n \times G$.

The compact $G$-space $\Delta_n(G; H_0,\ldots \,, H_n)$ is called a \emph{standard $n$-dimensional $G$-simplex} of type $(H_0, H_1, \ldots \,, H_n)$. The orbit space of the $G$-simplex $\Delta_n(G; H_0,\ldots \,, H_n)$ is $\Delta_n$, with the orbit projection $\pi : \Delta_n(G; H_0,\ldots \,, H_n) \to \Delta_n$ given by $\pi([x,g]) = x$.

A $G$-space $X$ is said to be \emph{equivariantly triangulable} if there exists a triangulation $t:K\to X|G$ of the orbit space $X|G$, such that for any $n$-dimensional simplex $s$ of $K$ there exist closed subgroups $H_0\supset H_1\supset \ldots \supset H_n$ of $G$ and an equivariant homeomorphism
\[
\alpha : \Delta_n(G; H_0,\ldots \,, H_n) \to p^{-1}(t(s))
\] 
inducing a linear homeomorphism $\alpha|G : \Delta_n \to t(s)$ of the orbit space. Here $p:X\to X|G$ is the orbit projection. $G$-subsets $p^{-1}(t(s))$ are called \emph{equivariant simplices} of the equivariant triangulation of $X$. Since the simplicial complex $K$ is endowed with the weak topology, the topology of the simplicial $G$-complex $X$ will be weak with respect to the family of all equivariant simplices.

The following theorem on equivariant triangulation of $G$-spaces was proved by Illman (\cite{Illman-1}, Theorem 5.5).

\begin{theorem}[Illman \cite{Illman-1}]\label{th-Illman}
Let $X$ be a $G$-space, and let $t:K\to X|G$ be a triangulation of the quotient $X|G$ such that, for any open simplex $\mathring{s}$, the set $t(\mathring{s})\subset X|G$ has constant orbit type. Then $X$ is equivariantly triangulable with triangulation $t:K' \to X|G$, where $K'$ is the barycentric subdivision of $K$.
\end{theorem}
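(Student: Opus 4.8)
The plan is to exploit the combinatorics of the barycentric subdivision together with the slice theorem for compact Lie group actions. Recall that each $n$-simplex $\sigma$ of the barycentric subdivision $K'$ is the convex hull of the barycenters $\hat{s}_0, \ldots, \hat{s}_n$ of a flag $s_0 \subsetneq s_1 \subsetneq \cdots \subsetneq s_n$ of simplices of $K$, with $\dim s_0 < \dim s_1 < \cdots < \dim s_n$. I would first pin down the orbit types along such a flag. The barycenter $\hat{s}_i$ lies in the open simplex $\mathring{s}_i$, which by hypothesis has constant orbit type, say $(H_i)$. Since $s_i$ is a face of $s_{i+1}$, the stratum $t(\mathring{s}_i)$ lies in the closure of $t(\mathring{s}_{i+1})$; the slice theorem, which provides a local model $G\times_{G_x} S_x$ in which nearby isotropy groups are subconjugate to $G_x$, forces isotropy to increase on passing to the closure. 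Hence $H_{i+1}$ is subconjugate to $H_i$, and choosing representatives coherently along the flag I obtain an honest nested chain $H_0 \supseteq H_1 \supseteq \cdots \supseteq H_n$ of closed subgroups. This is exactly the datum needed to form $\Delta_n(G; H_0,\ldots,H_n)$, and it is here that the passage to $K'$ is essential: a simplex of $K$ may carry isotropy groups on its faces that are not linearly ordered, whereas the flag structure of $K'$ always produces a chain.

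Next I would establish the local models and the comparison map. For a single open stratum $t(\mathring{s}_i)$ of constant orbit type $(H_i)$ over a contractible base, the bundle theorem for actions with one orbit type (again a consequence of the slice theorem) shows that $p^{-1}(t(\mathring{s}_i)) \to t(\mathring{s}_i)$ is a trivial fibre bundle with fibre $G/H_i$. The crucial compatibility is that, in $\Delta_n(G; H_0,\ldots,H_n)$, the open face spanned by $e_{i_0},\ldots,e_{i_k}$ with $i_0<\cdots<i_k$ has orbit type exactly $G/H_{i_k}$, so the correspondence $e_j \mapsto \hat{s}_j$ matches orbit types face by face. Using this I would construct the equivariant homeomorphism $\alpha : \Delta_n(G; H_0,\ldots,H_n) \to p^{-1}(t(\sigma))$ covering the affine homeomorphism $\Delta_n \to t(\sigma)$ of orbit spaces, proceeding by induction on the skeleta of $K'$: over each new simplex one extends the equivariant homeomorphism already defined on its boundary, using local triviality over the top stratum together with the equivariant projections $G/H_i \to G/H_{i-1}$ built into the nested chain.

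The main obstacle is precisely this inductive extension and the coherent gluing of the maps $\alpha$ across simplices sharing a face. Because the fibres jump along the boundary strata of $\sigma$, the homeomorphism cannot be produced by a single product trivialization; one must extend an equivariant homeomorphism prescribed on $\partial\sigma$, where the fibres over lower strata carry larger isotropy, to all of $\sigma$ so that the result is continuous across stratum boundaries and genuinely $G$-equivariant. I expect to handle this by an equivariant coning and extension technique based on the slice theorem: trivialize over the contractible top stratum, splice in the boundary data by an equivariant isotopy, and verify that the resulting map is a homeomorphism onto $p^{-1}(t(\sigma))$. Finally, since the inductive construction is carried out over the skeleta of all of $K'$ at once, the individual $\alpha$'s agree on common faces and assemble into a global equivariant triangulation of $X$, the weak topology being automatic from that of $K'$. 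Checking that these stratified extensions exist and match on overlaps is the technical heart of the argument.
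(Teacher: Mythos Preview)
The paper does not prove this theorem: it is quoted as Illman's equivariant triangulation theorem with a direct citation to \cite{Illman-1}, Theorem~5.5, and no argument is supplied. So there is nothing in the paper to compare your proposal against.

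That said, your outline is broadly the shape of Illman's actual proof. The key observation --- that simplices of the barycentric subdivision correspond to flags $s_0 \subsetneq \cdots \subsetneq s_n$ in $K$, and that the constant-orbit-type hypothesis together with semicontinuity of isotropy turns such a flag into a genuine chain $H_0 \supseteq \cdots \supseteq H_n$ of closed subgroups --- is exactly the reason the passage to $K'$ is necessary, and you have identified it correctly. You are also right that the substantive work lies in the inductive construction of the equivariant homeomorphisms $\alpha$ over the skeleta, extending boundary data across strata where the fibre type jumps; this is where Illman's paper invests most of its effort, and your sketch of ``trivialize over the top stratum, splice in boundary data by an equivariant isotopy'' is too vague to count as a proof. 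In particular, upgrading subconjugacy to an actual nested chain of subgroups, and then verifying that the local $\alpha$'s can be made to agree on shared faces (not just up to equivariant homeomorphism of the model simplex, but on the nose), requires care that your proposal does not supply. If you intend to fill this in rather than cite Illman, that coherence step is where you should focus.
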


As in the classical case, invariant open subsets of a simplicial $G$-complex admit an equivariant simplicial structure.

\begin{theorem}\label{th-otkr}
Invariant open subsets of a simplicial $G$-complex are equivariantly triangulable.
\end{theorem}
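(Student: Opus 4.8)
The plan is to reduce the equivariant statement to Illman's triangulation theorem (Theorem \ref{th-Illman}) together with the classical fact that an open subset of a polyhedron can be triangulated by a subdivision of the ambient complex. Let $X$ be a simplicial $G$-complex with equivariant triangulation $t : K \to X|G$ and orbit projection $p : X \to X|G$, and let $U \subseteq X$ be an invariant open subset. Since $U$ is $G$-invariant we have $p^{-1}(p(U)) = U$, so $V := p(U) = U|G$ is open in the polyhedron $X|G = |K|$, and the restriction $p : U \to V$ is the orbit projection of the $G$-space $U$.

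First I would record the key feature of the triangulation $t$: for every open simplex $\mathring{s}$ of $K$, the set $t(\mathring{s}) \subseteq X|G$ has constant orbit type. Indeed, $p^{-1}(t(s))$ is equivariantly homeomorphic to a standard $G$-simplex $\Delta_n(G; H_0, \ldots, H_n)$, and by the defining relation the stabilizer of a point $[x,g]$ with $x \in \Delta_n \setminus \Delta_{n-1}$ equals $gH_ng^{-1}$; hence over the relative interior $\mathring{s}$ the orbit type is the single conjugacy class $(H_n)$.

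Next I would invoke the classical result that the open subset $V$ of the polyhedron $|K|$ admits a triangulation $\tau : L \to V$ subordinate to $t$, i.e.\ such that each open simplex $\mathring{\sigma}$ of $L$ is carried by $\tau$ into some open simplex $t(\mathring{s})$ of $K$. Granting this, every $\tau(\mathring{\sigma})$ lies in a set of constant orbit type and therefore itself has constant orbit type. The triangulation $\tau : L \to U|G$ of the orbit space of the $G$-space $U$ thus satisfies the hypothesis of Theorem \ref{th-Illman}, and applying that theorem to $U$ yields an equivariant triangulation of $U$ (with respect to the barycentric subdivision $L'$). This proves that $U$ is equivariantly triangulable.

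The main obstacle is the classical (nonequivariant) input in the third step: triangulating an open subset of a polyhedron so that the new triangulation refines the old one. Because $V$ generally accumulates on its frontier, such a triangulation must use infinitely many simplices of rapidly decreasing size near $\partial V$; the standard way to produce it is to subdivide $K$ barycentrically again and again, more finely as one approaches $|K| \setminus V$, and to patch these nested subdivisions together using the fact that each barycentric subdivision refines the previous one. Once this subordinate triangulation of $V$ is in hand, the equivariant conclusion follows formally from the constant-orbit-type bookkeeping above and Illman's theorem.
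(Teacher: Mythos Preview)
Your proposal is correct and follows essentially the same route as the paper's proof: pass to the orbit space, observe that $U|G$ is open in the simplicial complex $X|G$, triangulate this open set so that each open simplex has constant orbit type, and then invoke Illman's Theorem~\ref{th-Illman}. The paper compresses the middle step into the single sentence ``Note that there exists a triangulation of $p(U)=U|G$ such that the orbit types of all open simplices in this triangulation are constant,'' whereas you spell out both why the original open simplices have constant orbit type and why a subordinate triangulation of the open subset exists; but this is elaboration of the same argument, not a different one.
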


\begin{proof}
Let $X$ be a simplicial $G$-complex with an invariant open subset  $U\subset X$. The orbit quotient $X|G$ is a simplicial complex. Since the projection  $p:X\to X|G$ is an open map, $p(U)=U|G$ is an open subset of $X|G$. Note that there exists a triangulation of $p(U)=U|G$ such that the orbit types of all open simplices in this triangulation are constant. Then Theorem \ref{th-Illman} implies that there exists an equivariant triangulation of the $G$-space $U$. The theorem is proved.
\end{proof}

Like CW-complexes, $G$-CW-complexes are constructed inductively, by attaching at each step equivariant cells of a given dimension to the result of the previous step. Thus, a $G$-space $X$ is called a \emph{$G$-CW-complex} if it is represented as a union of $G$-spaces $X^n$, $n=1,2, \ldots $\,, and the following conditions are satisfied:

1) $X^0$ is a disjoint union of orbits $G|H$, that is, a disjoint union of equivariant $0$-cells;

2) $X^{n+1}$ is obtained from $X^n$ by attaching equivariant $(n+1)$-cells $G|H \times D^{n+1}$ along equivariant maps $G|H \times S^n \to X^n$;

3) the topology of $X$ is consistent with the family $\{X^n; \ n \geqslant 0\}$.

\begin{theorem}\label{th-retract}
A $G$-CW-complex is an equivariant retract of a simplicial $G$-complex.
\end{theorem}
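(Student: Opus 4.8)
The plan is to reduce the statement to the triangulation result already at hand, Theorem \ref{th-otkr}, by passing through equivariant ANR theory. Concretely, I would establish two assertions: (i) every $G$-CW-complex $X$ is an equivariant absolute neighbourhood retract (a $G$-ANR) in the category of compactly generated $G$-spaces; and (ii) every such $G$-ANR is an equivariant retract of an invariant open subset $U$ of a simplicial $G$-complex $K$. Granting these, Theorem \ref{th-otkr} asserts that $U$ is itself equivariantly triangulable, hence a simplicial $G$-complex, and the composite of the inclusion $X\hookrightarrow U$ with the retraction $r\colon U\to X$ exhibits $X$ as an equivariant retract of a simplicial $G$-complex. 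Thus the whole difficulty is concentrated in (i) and (ii); the final appeal to Theorem \ref{th-otkr} is precisely what converts an open-neighbourhood retraction into a retraction onto a \emph{simplicial} object, which is why that theorem was proved first.

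For assertion (i) I would argue by induction over the equivariant cells, following the inductive definition of a $G$-CW-complex recalled above. The $0$-skeleton $X^{0}$ is a disjoint union of orbits $G|H$, and each $G|H$, as well as each equivariant cell $G|H\times D^{n}$, is a $G$-ANR because $G$ is a compact Lie group. The passage from $X^{n}$ to $X^{n+1}$ attaches cells along the equivariant inclusions $G|H\times S^{n}\hookrightarrow G|H\times D^{n+1}$, which are closed $G$-cofibrations; here I would invoke the equivariant union (gluing) theorem for ANRs to conclude that the adjunction space $X^{n+1}$ is again a $G$-ANR, and then a limit theorem for the expanding sequence $X^{0}\subset X^{1}\subset\cdots$, carrying the weak topology, to conclude that $X=\bigcup_{n}X^{n}$ is a $G$-ANR. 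This is where the ideas of Cauty \cite{Cauty-2} would be used, since simplicial $G$-complexes and $G$-CW-complexes carry the weak topology and need not be metrizable, so the classical ANR theorems must be applied in the compactly generated equivariant setting.

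For assertion (ii) I would first embed $X$ equivariantly as a closed $G$-subset of a simplicial $G$-complex; the ambient space can be arranged to be equivariantly triangulable by Illman's theorem \ref{th-Illman}, applied after a subdivision making the orbit type constant on each open simplex. The $G$-ANR property of $X$ then yields an invariant open neighbourhood $U$ of $X$ in this simplicial $G$-complex together with an equivariant retraction $r\colon U\to X$. At this point Theorem \ref{th-otkr} triangulates $U$, and the argument closes.

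I expect the main obstacle to be assertion (i), specifically the equivariant union and limit theorems for ANRs in the compactly generated category: one must ensure both that attaching an equivariant cell along a $G$-cofibration preserves the $G$-ANR property and that the colimit over the skeletal filtration does so as well, despite the weak topology and possible failure of metrizability. It is exactly to control these points that the infinite-dimensional constructions of Cauty \cite{Cauty-2} and the equivariant triangulation machinery of Illman \cite{Illman-1} are to be combined. A subsidiary difficulty is producing the equivariant closed embedding of $X$ into a simplicial $G$-complex possessing an invariant open neighbourhood that retracts onto $X$; once this embedding is available, Theorem \ref{th-otkr} finishes the proof at once.
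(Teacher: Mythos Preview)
Your strategy departs substantially from the paper's, and the departure conceals a genuine gap. The paper does not go through equivariant ANR theory at all. Instead it descends to the orbit space: since $X/G$ is an ordinary CW-complex, Cauty's nonequivariant result \cite{Cauty-1} embeds $X/G$ as a closed neighbourhood retract in a simplicial complex, and after passing to an open neighbourhood and re-triangulating one obtains a retraction $r\colon K\to X/G$ from a simplicial complex $K$. The equivariant object is then manufactured as the pullback
\[
r^{*}(X)=\{(k,x)\in K\times X:\ r(k)=p(x)\},
\]
whose orbit space is $K$ itself. Refining the triangulation of $K$ so that $r$ carries each open simplex into a single cell of $X/G$ forces constant orbit type on open simplices, and Illman's Theorem~\ref{th-Illman} then triangulates $r^{*}(X)$ equivariantly. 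The map $x\mapsto(i(p(x)),x)$ is a closed equivariant embedding and the second projection $(k,x)\mapsto x$ is the equivariant retraction. Note that Theorem~\ref{th-otkr} is not invoked here; its role comes only later, in Theorem~\ref{lem-0}.

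The gap in your plan is precisely what you label a ``subsidiary difficulty'': in assertion~(ii) you assume you can embed $X$ equivariantly as a closed $G$-subset of a simplicial $G$-complex, but you give no construction, and this is essentially the content of the theorem you are trying to prove. The $G$-ANR property helps only \emph{after} such an ambient simplicial $G$-complex is already in hand. Standard equivariant embedding theorems target linear $G$-spaces or Hilbert $G$-cubes rather than simplicial $G$-complexes, and they typically require metrizability, which infinite $G$-CW-complexes in the weak topology lack. The paper's pullback trick is exactly the device that builds the ambient simplicial $G$-complex out of a nonequivariant one for $X/G$, sidestepping this circularity. Your assertion~(i), while true and indeed used elsewhere in the paper, is therefore an unnecessary detour for the present theorem.
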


\begin{proof}
Let $X$ be a $G$-CW-complex. The orbit space $X|G$ is a CW-complex. By a result of Cauty (\cite{Cauty-1}, Corollary 2) there exists a closed embedding $i : X|G \to L$ in a simplicial complex $L$ such that $i(X|G)$ is a neighbourhood retract of $L$ (see \cite{Cauty-1}, Theorems 7 and 8). Since open subsets of a simplicial complex are triangulable, we can assume that $i(X|G)$ is a retract of a simplicial complex $K$. Let $r:K \to i(X|G)$ be the corresponding retraction. Consider a triangulation of $K$ such that, for any open simplex $\mathring{s}$ of it, the image $r(\mathring{s})$ lies in some cell of the CW-complex $i(X|G)$.

Now consider the subspace $r^*(X) = \{(k,x)\in K\times X; \ r(k)=p(x)\}$ of the product $K\times X$, where $p:X\to X|G$ is the natural projection. We define a $G$-action on $r^*(X)$ by $g(k,x) = (k,gx)$. This turns $r^*(X)$ into a $G$-space. Note that the orbit space $r^*(X)|G$ is the simplicial complex $K$. Let $\pi : r^*(X) \to K$ be the natural projection.

We define a map $\tilde{i} : X \to r^*(X)$ by $\tilde{i} (x) = (i(p(x)), x)$. This is an equivariant embedding of the $G$-space $X$ in $r^*(X)$. Since $i(X|G)$ is closed in $K$ and $\tilde{i} (X) = \pi^{-1}(i(X|G))$, the subset $\tilde{i} (X)$ is closed in $r^*(X)$.

Since all points in each cell of $i(X|G)$ have the same orbit type, the orbit type is constant on all open simplices of $K$. Therefore, by Theorem \ref{th-Illman} there exists an equivariant triangulation of the $G$-space $r^*(X)$. The theorem is proved.
\end{proof}

In the case of trivial $G$-action, Theorem \ref{th-retract} implies the following. 

\begin {theorem}
Any CW-complex is a retract of a simplicial complex.
\end{theorem}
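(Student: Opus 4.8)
The plan is to deduce this directly from Theorem \ref{th-retract} by specializing to the trivial group $G=\{e\}$ (equivalently, to an arbitrary compact Lie group $G$ acting trivially). The whole content of the argument is to check that, under this specialization, each equivariant notion appearing in Theorem \ref{th-retract} collapses to its classical counterpart; once that is done the conclusion is immediate, so no separate construction is needed.

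First I would verify that a $G$-CW-complex with trivial action is exactly a CW-complex. When $G$ acts trivially every isotropy group equals $G$, so the only relevant orbit is $G|G$, a single point. Hence the equivariant $0$-cells $G|H$ reduce to points and the equivariant $(n+1)$-cells $G|H\times D^{n+1}$ reduce to ordinary cells $D^{n+1}$ attached along ordinary maps $S^n\to X^n$. Thus conditions (1)--(3) in the inductive construction coincide verbatim with the usual skeletal construction of a CW-complex.

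Next I would check that a simplicial $G$-complex with trivial action is an ordinary simplicial complex. In the standard $G$-simplex $\Delta_n(G;H_0,\ldots,H_n)$ the only admissible chain is $H_0=\cdots=H_n=G$, and then the defining relation identifies $(x,g)$ with $(x,g')$ for all $g,g'$, since $gG=g'G$ always holds. Therefore $\Delta_n(G;G,\ldots,G)$ is homeomorphic to $\Delta_n$, an equivariant triangulation becomes an ordinary triangulation, and the equivariant simplices become genuine simplices. Finally, an equivariant retraction between trivial $G$-spaces is nothing but an ordinary retraction.

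With these identifications, Theorem \ref{th-retract} applied to a given CW-complex $X$, viewed as a $G$-CW-complex with trivial $G$-action, yields a simplicial $G$-complex with trivial action --- that is, an ordinary simplicial complex --- together with an equivariant, hence ordinary, retraction onto $X$, which is exactly the assertion. I do not expect any genuine obstacle: the only point requiring care is the routine bookkeeping that every equivariant object degenerates correctly, and in particular that the weak topology with respect to the family of equivariant simplices reduces to the usual weak topology with respect to the family of simplices.
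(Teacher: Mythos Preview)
Your proposal is correct and follows exactly the approach indicated in the paper: the authors simply state that the result follows from Theorem~\ref{th-retract} in the case of trivial $G$-action (and remark that it also follows easily from Cauty's results). Your write-up merely fills in the routine verification that each equivariant notion degenerates to its classical counterpart under the trivial action, which the paper leaves implicit.
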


This statement follows easily from the results of Cauty \cite{Cauty-1}.

Open invariant subsets of a $G$-CW-complex are not $G$-CW-complexes in general (see \cite{Cauty-2}, Examples 1 and 2). However, they are equivariant retracts of $G$-CW-complexes. This follows from a stronger result presented next.

\begin{theorem}\label{lem-0}
Any invariant open subset of a $G$-CW-complex is an equivariant retract of a simplicial $G$-complex.
\end{theorem}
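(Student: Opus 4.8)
The plan is to deduce this statement directly from the two results already established, namely Theorem~\ref{th-retract} (a $G$-CW-complex is an equivariant retract of a simplicial $G$-complex) and Theorem~\ref{th-otkr} (invariant open subsets of a simplicial $G$-complex are equivariantly triangulable). The idea is that an invariant open subset of an equivariant retract should itself be the equivariant retract of a suitably chosen invariant open subset of the ambient simplicial $G$-complex, and such open subsets are again simplicial $G$-complexes by Theorem~\ref{th-otkr}. Thus the whole argument reduces to transporting the retraction to the relevant open subsets.

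Let $X$ be a $G$-CW-complex and let $U \subset X$ be an invariant open subset. First I would invoke Theorem~\ref{th-retract} to obtain a simplicial $G$-complex $Y$, an equivariant embedding of $X$ into $Y$ (along which I identify $X$ with its image), and an equivariant retraction $\rho : Y \to X$ with $\rho|_X = \mathrm{id}_X$. Next I would set $V = \rho^{-1}(U)$. Since $\rho$ is continuous and equivariant and $U$ is invariant and open, $V$ is an invariant open subset of $Y$. Because $\rho|_X = \mathrm{id}_X$, every point $x \in U$ satisfies $\rho(x) = x \in U$, so $U \subset V$; and by the very definition of $V$ one has $\rho(V) \subset U$. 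Hence the restriction $\rho|_V : V \to U$ is an equivariant retraction, and $U$ is an equivariant retract of $V$.

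It then remains to observe that $V$ is a simplicial $G$-complex. Since $V$ is an invariant open subset of the simplicial $G$-complex $Y$, Theorem~\ref{th-otkr} furnishes an equivariant triangulation of $V$, so $V$ is indeed a simplicial $G$-complex. Composing, $U$ is an equivariant retract of the simplicial $G$-complex $V$, which is the desired conclusion. The only point requiring care is the verification that the natural candidate $V = \rho^{-1}(U)$ simultaneously retracts equivariantly onto $U$ (which forces the inclusion $U \subset V$, guaranteed precisely because $\rho$ fixes $X$ pointwise) and carries an equivariant simplicial structure; the latter is exactly the content of Theorem~\ref{th-otkr}. Consequently no genuinely new difficulty arises beyond correctly packaging the two earlier results, and I do not expect a substantial obstacle in this argument.
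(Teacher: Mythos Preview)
Your proof is correct and follows exactly the same approach as the paper: apply Theorem~\ref{th-retract} to embed $X$ as an equivariant retract of a simplicial $G$-complex, pull back $U$ along the retraction to an invariant open set, and invoke Theorem~\ref{th-otkr} to equip that open set with a simplicial $G$-structure. The only cosmetic difference is that the paper keeps the embedding $i$ explicit rather than identifying $X$ with its image.
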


\begin{proof}
Let $U$ be an invariant open subset of a $G$-CW-complex $X$. By Theorem \ref{th-retract} there exists a simplicial $G$-complex $K$ and a closed equivariant embedding $i:X\to K$ such that $i(X)$ is an equivariant retract of $K$. Let $r:K \to i(X)$ be an equivariant retraction. Since $i(U)\subset i(X)\subset K$ is an invariant open subset of $i(X)$, the subset $r^{-1}(i(U))$ is invariant and open in $K$. Furthermore, $i(U)$ is an equivariant retract of $r^{-1}(i(U))$, which is a simplicial $G$-complex by Theorem \ref{th-otkr}. The theorem is proved.
\end{proof}

\section{Equivariant fibrations}

Let $E$ and $B$ be $G$-spaces. An equivariant map $p : E \to B$ is said to have the \emph{equivariant covering homotopy property} with respect to a $G$-space $X$ if for arbitrary
equivariant maps $\widetilde{f}:X \to E$ and $F: X\times I \to B$ such that $F(x,0)=p\widetilde{f}(x)$, $x\in X$,
there exists an equivariant map $\widetilde{F} : X\times I \to  B$, satisfying $\widetilde{F}(x,0)=\widetilde{f}(x)$ for any $x\in X$ and $p\circ \widetilde{F} =F$.

An equivariant map $p : E \to B$ is called an \emph{equivariant Hurewicz fibration} if $p$ has the equivariant covering homotopy property with respect to any $G$-space $X$. If an equivariant map $p : E \to B$ has the equivariant covering homotopy property with respect to $G$-CW-complexes, then it is called an \emph{equivariant Serre fibration}.

\begin{lemma}\label{prop-1}
Let $p:E\to B$ be an equivariant Serre fibration, and let $X$ be an equivariant retract of a simplicial $G$-complex $K$. Then $p:E\to B$ has the equivariant covering homotopy property with respect to the $G$-space $X$.
\end{lemma}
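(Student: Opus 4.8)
The plan is to transport the lifting problem from $X$ to the ambient simplicial $G$-complex $K$, solve it there, and then pull the solution back to $X$. Write $s : X \to K$ for the equivariant inclusion and $r : K \to X$ for the equivariant retraction, so that $r \circ s = \mathrm{id}_X$ and both maps are $G$-equivariant. The essential input is that a simplicial $G$-complex is itself a $G$-CW-complex, which follows from Illman's analysis of equivariant simplices in \cite{Illman-1}. Consequently, since $p$ is an equivariant Serre fibration, it has the equivariant covering homotopy property with respect to $K$, and this is what the whole argument will feed on.

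Now suppose we are given equivariant maps $\widetilde{f} : X \to E$ and $F : X \times I \to B$ with $F(x,0) = p\widetilde{f}(x)$. First I would push this data forward along the retraction $r$, setting
\[
\widetilde{f}\,' = \widetilde{f} \circ r : K \to E, \qquad F' = F \circ (r \times \mathrm{id}_I) : K \times I \to B.
\]
Both maps are equivariant, being composites of equivariant maps, and they are compatible in the required sense, since $F'(k,0) = F(r(k),0) = p\widetilde{f}(r(k)) = p\widetilde{f}\,'(k)$ for every $k \in K$.

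Applying the equivariant covering homotopy property of $p$ with respect to the $G$-CW-complex $K$ to the pair $(\widetilde{f}\,', F')$ yields an equivariant map $\widetilde{F}' : K \times I \to E$ satisfying $\widetilde{F}'(k,0) = \widetilde{f}\,'(k)$ and $p \circ \widetilde{F}' = F'$. Finally I would restrict this lift back to $X$ by precomposing with the inclusion $s$, defining $\widetilde{F} = \widetilde{F}' \circ (s \times \mathrm{id}_I) : X \times I \to E$. This map is equivariant, and using $r \circ s = \mathrm{id}_X$ one checks the initial condition $\widetilde{F}(x,0) = \widetilde{F}'(s(x),0) = \widetilde{f}\,'(s(x)) = \widetilde{f}(r(s(x))) = \widetilde{f}(x)$, as well as the lifting condition $p\widetilde{F}(x,t) = F'(s(x),t) = F(r(s(x)),t) = F(x,t)$, so that $\widetilde{F}$ is the desired equivariant lift.

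The argument is almost entirely formal, relying only on the functoriality of these pushforward and pullback constructions together with the identity $r \circ s = \mathrm{id}_X$. The one point that genuinely needs to be in place — and which I expect to be the main thing to pin down — is that the simplicial $G$-complex $K$ qualifies as a $G$-CW-complex, so that the Serre fibration hypothesis may legitimately be invoked for the cylinder $K \times I$; once that is granted, everything else reduces to a routine verification of equivariance and of the two defining conditions of the covering homotopy property.
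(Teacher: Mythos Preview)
Your argument is correct and is essentially the same as the paper's: push the lifting problem to $K$ via the retraction $r$, lift there using the Serre hypothesis, and restrict back to $X$. The paper phrases the last step as restriction $\widetilde{H}|_{X\times I}$ rather than precomposition with $s\times\mathrm{id}_I$, and it leaves implicit the point you flag about $K$ being a $G$-CW-complex, but otherwise the proofs coincide.
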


\begin{proof}
Let $f:X\to E$ be an equivariant map, and let $F:X\times I \to B$ be an equivariant homotopy of the map $p \circ f$: $F_0=p \circ f$. Consider an equivariant retraction $r:K \to X$ and an equivariant homotopy $H:K\times I \to B$ given by $H=F\circ (r\times \rm{id})$. Clearly, $H_0=p\circ (f\circ r)$. Since $p:E\to B$ is an equivariant Serre fibration, there exists a covering homotopy $\widetilde{H}:K\times I \to E$ such that $\widetilde{H}_0=f\circ r$ and $p\circ \widetilde{H}=H$. Now it is easy to see that $\widetilde{F}=\widetilde{H}|_{X\times I}:X\times I \to E$ is the required
equivariant covering homotopy, that is, $\widetilde{F}_0=f$ and $p\circ \widetilde{F}= F$. The lemma is proved.
\end{proof}

\begin{definition}\label{def-EULC}
A $G$-space $X$ is said to be \emph{equivariantly uniformly locally contractible} if there exists an invariant neighbourhood $V$ of the diagonal $\Delta \subset X \times X$ and an equivariant map $\lambda : V \times I \to X$ such that

(i) $\lambda (x, y, 0) = x$ and $\lambda (x, y, 1) = y$ for all $(x, y) \in V$; 

(ii) $\lambda (x, x, t) = x$ for all $x\in X$ and $t\in I$.
\end{definition}

It is easy to see that the class of equivariantly uniformly locally contractible spaces contains all $G$-ANR-spaces. Therefore, $G$-CW-complexes are equivariantly uniformly locally contractible.

The previous results allow us to prove the following theorem.

\begin{theorem}\label{th-rassloenie}
Let $p:E\to B$ be an equivariant Serre fibration, where $E$ and $B$ are $G$-CW-complexes. Then $p:E\to B$ is an equivariant Hurewicz fibration in the class of all compactly generated spaces.
\end{theorem}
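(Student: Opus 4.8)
The plan is to verify the equivariant covering homotopy property of $p$ with respect to an arbitrary compactly generated $G$-space $X$. One cannot simply invoke Lemma \ref{prop-1}, since a general compactly generated $G$-space need not be an equivariant retract of a simplicial $G$-complex (only $G$-CW-complexes and their invariant open subsets were shown to be such, in Theorems \ref{th-retract} and \ref{lem-0}). So the first step is to reduce the Hurewicz property to a single \emph{universal} lifting problem, over a test space built from $E$ and $B$ that one can hope to recognize as an equivariant retract of a simplicial $G$-complex.

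First I would introduce the universal space $W_p = \{(e,\omega)\in E\times B^I : p(e)=\omega(0)\}$, formed in the category of compactly generated $G$-spaces, with the $G$-action $g(e,\omega)=(ge,g\omega)$, where $(g\omega)(t)=g\,\omega(t)$. It carries a tautological initial lift $W_p\to E$, $(e,\omega)\mapsto e$, and a tautological homotopy $W_p\times I\to B$, $((e,\omega),t)\mapsto \omega(t)$. As is standard, $p$ is an equivariant Hurewicz fibration if and only if this universal problem admits an equivariant solution, i.e. an equivariant lifting function $\Lambda:W_p\times I\to E$: indeed, any equivariant data $(\widetilde f,F)$ over a space $X$ factors through $W_p$ via the adjoint map $X\to W_p$, $x\mapsto(\widetilde f(x),F(x,-))$, so $\Lambda$ transports the solution back to $X$. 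Here the passage to compactly generated spaces is essential, since it is precisely the exponential law and the good behaviour of the path space $B^I$ in this category that make $W_p$ a legitimate test object and guarantee that a solution of the universal problem yields an equivariant covering homotopy for every $X$.

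The heart of the argument is then to show that $W_p$ is itself an equivariant retract of a simplicial $G$-complex, so that Lemma \ref{prop-1} applies. Since $E$ and $B$ are $G$-CW-complexes they are equivariantly uniformly locally contractible, and I would use the corresponding contractions $\lambda_E$ and $\lambda_B$ of Definition \ref{def-EULC}, together with the compactly generated structure of $B^I$, to produce an equivariant uniform local contraction of $W_p$. Being equivariantly uniformly locally contractible (and suitably separated in the compactly generated setting), $W_p$ should admit a closed equivariant embedding as a retract of a simplicial $G$-complex by the same mechanism that proved Theorem \ref{th-retract}: one embeds the orbit space $W_p|G$ as a neighbourhood retract of a simplicial complex via Cauty \cite{Cauty-1}, arranges after subdivision that the orbit type is constant on each open simplex, and applies Illman's triangulation Theorem \ref{th-Illman} to the pullback, exactly as in the proof of Theorem \ref{th-retract}. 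Granting this, Lemma \ref{prop-1} yields the equivariant covering homotopy property of $p$ with respect to $W_p$; applying it to the tautological data produces the desired lifting function $\Lambda$, and hence $p$ is an equivariant Hurewicz fibration.

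The main obstacle is precisely this recognition step. The retract theorem was established only for $G$-CW-complexes, whereas $W_p$ is in general not a $G$-CW-complex, so one must reprove the embedding-as-retract statement directly for the equivariantly uniformly locally contractible, compactly generated space $W_p$ — verifying that $W_p|G$ is an ANR which embeds as a neighbourhood retract in a simplicial complex with constant orbit type on each open simplex — and then check that the retraction, the triangulation, and the resulting lifting function are all continuous and equivariant in the compactly generated topology. This continuity and equivariance bookkeeping, carried out with the uniform contractions $\lambda_E$ and $\lambda_B$, is where the compactly generated hypothesis is indispensable and where the non-equivariant argument of Cauty \cite{Cauty-2} must be genuinely upgraded to the equivariant setting.
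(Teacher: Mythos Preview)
Your global approach via the universal test space $W_p = E\times_B B^I$ is natural, but the step you yourself flag as the ``main obstacle'' is a genuine gap, not just bookkeeping. The retract machinery of the paper (Theorem~\ref{th-retract}, Cauty's embedding, Illman's Theorem~\ref{th-Illman}) is specific to $G$-CW-complexes: Cauty's result embeds a \emph{CW-complex} as a neighbourhood retract of a simplicial complex, and Illman's theorem requires a \emph{triangulation} of the orbit space with constant orbit type on open simplices. The orbit space $W_p|G$ involves the path space $B^I$ and is in general neither a CW-complex nor triangulable, so the mechanism you sketch does not go through as written. Showing that every equivariantly uniformly locally contractible compactly generated $G$-space is an equivariant retract of a simplicial $G$-complex would amount to an equivariant ANR theorem far stronger than anything proved in the paper; you have not supplied such an argument.

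The paper avoids this difficulty by working locally rather than globally. By the Hurewicz uniformization criterion (tom~Dieck \cite{tom Dieck_1}), since $B$ is paracompact it suffices to produce, for each $b\in B$, an invariant neighbourhood $U$ and an equivariant $\omega:U\times p^{-1}(U)\to E$ with $p\,\omega(b',e)=b'$ and $\omega(p(e),e)=e$. The equivariant uniform local contractibility of $B$ gives $\lambda:U\times U\times I\to B$, from which one sets $F(b',e,t)=\lambda(p(e),b',t)$; one then needs only the covering homotopy property of $p$ with respect to the test space $U\times p^{-1}(U)$. But $U\subset B$ and $p^{-1}(U)\subset E$ are invariant open subsets of $G$-CW-complexes, so by Theorem~\ref{lem-0} each is an equivariant retract of a simplicial $G$-complex, hence so is their product, and Lemma~\ref{prop-1} applies directly. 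Setting $\omega(b',e)=\tilde F(b',e,1)$ finishes the proof. The key economy is that the local test space $U\times p^{-1}(U)$ is already covered by Theorem~\ref{lem-0}, whereas your global $W_p$ is not.
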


\begin{proof}
It is known that when B is a paracompact $G$-space, an equivariant locally trivial bundle  $p:E\to B$ is an equivariant Hurewicz fibration (see \cite{tom Dieck_1}, Example 5). Therefore, it is enough to prove that for an arbitrary point $b\in B$ there exists an invariant open neighbourhood $U$ of $b$ and an equivariant map   $\omega : U \times p^{-1}(U) \to E$ such that

(1) $p\circ \omega (b, e) = b$ for all $(b, e) \in U \times p^{-1} (U)$, 

(2) $\omega (p (e), e) = e$ for all $e \in p^{-1} (U)$.

Since the $G$-CW-complex B is equivariantly uniformly locally contractible (see Definition \ref{def-EULC}), there exists an invariant open neighbourhood $U$ of $b\in B$ and an equivariant map 
$$\lambda: U \times U \times I \to B,$$
such that
$$\lambda (x, y, 0) = x, \quad \lambda (x, y, 1) = y, \quad \lambda (x, x, t) = x$$ 
for all $x,y \in U$ and $t\in I$.

Now consider the equivariant map
$$F: U \times  p^{-1} (U) \times I \to B,$$ 
given by
$$F (b, e, t ) = \lambda (p (e), b, t)$$ 
for all $(b, e, t )\in U \times  p^{-1} (U) \times I$. Since $F(b,e,0)=\lambda (p (e), b, 0) = p(e) = p( pr_2(b,e,0))$, the map $F$ is an equivariant homotopy of the map  $p\circ pr_2$.

Since $U$ is an equivariant retract of a simplicial $G$-complex (see Theorem \ref{lem-0}) and $p:E\to B$ is an equivariant Serre fibration, Lemma \ref{prop-1} implies that there is an equivariant covering homotopy
$$\tilde{F}: U \times p^{-1} (U) \times I \to E,$$
satisfying $\tilde{F}(b,e,0)=e$ and $p\circ \tilde{F} = F$. Now we define the required equivariant
map $\omega : U \times p^{-1}(U) \to E$ by the formula
$$\omega (b, e) = \tilde{F}(b,e,1).$$
Conditions (1) and (2) are verified easily. The theorem is proved.
\end{proof}

\section{H-fixed point sets and G-fibrations}

Let $p:E\to B$ be a Hurewicz $G$-fibration. Given a closed subgroup $H$ of $G$, the induced $H$-fixed point map $p^H : E^H \to B^H$ is a Hurewicz fibration in the usual sense. A natural question arises: is this necessary condition also sufficient? This question was posed by James and Segal in \cite{james-segal}.

Why this question is natural is explained, in particular, by the following well-known result (for example, see \cite{luck}, § 15.3).

\begin{theorem}\label{th-serr}
An equivariant map $p:E\to B$ is a Serre $G$-fibration if and only if the map $p^H : E^H \to B^H$ is a Serre fibration for any closed subgroup $H$ of $G$.
\end{theorem}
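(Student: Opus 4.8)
The plan is to reduce both implications to the single observation that equivariant maps out of an orbit cell $G/H \times D^n$ carry exactly the same data as ordinary maps into the $H$-fixed point set. Concretely, sending a $G$-map $\phi : G/H \times D^n \to Y$ to its restriction $x \mapsto \phi(eH, x)$ on the base coset sets up a natural bijection between equivariant maps $G/H \times D^n \to Y$ and maps $D^n \to Y^H$; the inverse sends $a : D^n \to Y^H$ to the $G$-map $(gH, x) \mapsto g\cdot a(x)$, which is well defined precisely because $a(x)$ is $H$-fixed. The same correspondence holds with $D^n$ replaced by $D^n \times I$. Under this dictionary, the equivariant covering homotopy property of $p$ with respect to $G/H \times D^n$ becomes exactly the ordinary covering homotopy property of $p^H : E^H \to B^H$ with respect to $D^n$.

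Granting this dictionary, the forward implication is immediate. Given a lifting problem for $p^H$ consisting of $a : D^n \to E^H$ and a homotopy $h : D^n \times I \to B^H$ with $h(\cdot, 0) = p^H \circ a$, I would transport $a$ and $h$ to equivariant maps $\bar a : G/H \times D^n \to E$ and $\bar h : G/H \times D^n \times I \to B$. Since $G/H \times D^n$ is a $G$-CW-complex, the Serre $G$-fibration $p$ supplies an equivariant covering homotopy $\bar{\tilde h} : G/H \times D^n \times I \to E$, and restricting it to the base coset yields a lift $\tilde h : D^n \times I \to E^H$ of the original problem. Hence $p^H$ is a Serre fibration.

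For the converse I would first establish the standard reduction that a $G$-map is a Serre $G$-fibration as soon as it has the equivariant covering homotopy property with respect to every orbit cell $G/H \times D^n$. This is proved by induction over the equivariant skeleta of an arbitrary $G$-CW-complex $X$: assuming a lift of a homotopy $F : X \times I \to B$ has been constructed over $X^n \times I$ extending the prescribed initial map on $X^{n+1} \times \{0\}$, one extends it across each attached cell by solving, on $G/H \times D^{n+1}$, a relative lifting problem over the subspace $(D^{n+1} \times \{0\}) \cup (S^n \times I)$. Because the pair $(D^{n+1} \times I,\ (D^{n+1} \times \{0\}) \cup (S^n \times I))$ is homeomorphic to $(D^{n+1} \times I,\ D^{n+1} \times \{0\})$, this relative problem is solved by the \emph{absolute} covering homotopy property for $G/H \times D^{n+1}$. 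Passing to the union over all skeleta gives the lift on $X \times I$, which is continuous by the weak topology of the $G$-CW-complex. Finally, by the dictionary above, the hypothesis that each $p^H$ is a Serre fibration says precisely that $p$ has the covering homotopy property with respect to every $G/H \times D^n$, and the converse follows.

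The routine parts are the fixed-point/orbit-cell correspondence and the forward direction; the main obstacle is the skeletal induction in the converse. There one must arrange the inductively constructed partial lifts to agree on overlaps and assemble into a single continuous equivariant map, and one must check that the homeomorphism of pairs used to convert the relative extension into an absolute one is compatible with the $G/H$-factor — which it is, since $G$ acts only on that coordinate while the homeomorphism acts on $D^{n+1} \times I$. Keeping track of these compatibilities, rather than any single lifting step, is where the real work lies.
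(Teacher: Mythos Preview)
Your argument is correct and is in fact the standard proof of this result. However, the paper does not supply its own proof of this theorem: it is stated there as a well-known fact, with a reference to L\"uck's book (\S 15.3) and a remark that the finite-group case is in Bredon. So there is nothing to compare against; your write-up simply fills in what the paper chose to cite.

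One small comment on the forward direction: as written, you invoke that $G/H \times D^n$ is a $G$-CW-complex and appeal to the Serre $G$-fibration property directly. That is fine, but the same dictionary argument actually shows the sharper fact that the equivariant covering homotopy property with respect to orbit cells $G/H \times D^n$ is \emph{equivalent} to each $p^H$ having the covering homotopy property with respect to $D^n$; combined with your skeletal-induction reduction, this gives both implications uniformly without needing to separately observe that $G/H \times D^n$ is a $G$-CW-complex.
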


In the case of a finite group $G$ Theorem \ref{th-serr} was proved by Bredon in \cite{Bredon}, Ch. III, 4.1.

\begin{theorem}\label{th-nepodvij-serr}
Let $p:E\to B$ be an equivariant map of $G$-CW-complexes. If the map $p^H : E^H \to B^H$ is a Serre fibration for any closed subgroup $H$ of $G$, then $p:E\to B$ is a Hurewicz $G$-fibration in the class of compactly generated $G$-spaces.
\end{theorem}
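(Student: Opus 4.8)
The plan is to deduce this statement directly by composing the two main results already established, since the theorem is essentially a formal corollary of them. The hypothesis supplies exactly the fixed-point condition appearing in Theorem \ref{th-serr}, while the conclusion is precisely what Theorem \ref{th-rassloenie} delivers for Serre $G$-fibrations of $G$-CW-complexes; so the task reduces to chaining these two facts together.

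First I would invoke Theorem \ref{th-serr}. By assumption, the $H$-fixed point map $p^H : E^H \to B^H$ is a Serre fibration for every closed subgroup $H$ of $G$. The ``if'' direction of Theorem \ref{th-serr} then asserts precisely that the equivariant map $p : E \to B$ is a Serre $G$-fibration, that is, that $p$ has the equivariant covering homotopy property with respect to all $G$-CW-complexes.

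Next, since $E$ and $B$ are $G$-CW-complexes by hypothesis, I would apply Theorem \ref{th-rassloenie} to the Serre $G$-fibration $p$. That theorem upgrades any equivariant Serre fibration of $G$-CW-complexes to an equivariant Hurewicz fibration in the class of compactly generated $G$-spaces, which is exactly the desired conclusion.

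In truth there is no substantive obstacle remaining at this stage: all of the genuine difficulty has been absorbed into the two preceding theorems---the fixed-point characterization of Serre $G$-fibrations (Theorem \ref{th-serr}) and the Serre-to-Hurewicz passage over $G$-CW-complexes (Theorem \ref{th-rassloenie}), the latter resting in turn on the embedding-as-equivariant-retract result (Theorem \ref{th-retract}). The present statement is therefore a formal consequence obtained by composing these two implications, and the only point requiring care is verifying that the hypotheses of each cited theorem are met, which they are verbatim.
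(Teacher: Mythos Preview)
Your proposal is correct and matches the paper's own proof essentially verbatim: first apply Theorem~\ref{th-serr} to pass from the fixed-point hypothesis to a Serre $G$-fibration, then apply Theorem~\ref{th-rassloenie} to upgrade to a Hurewicz $G$-fibration in compactly generated $G$-spaces. There is nothing to add or correct.
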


\begin{proof}
The fact that  $p^H : E^H \to B^H$ is a Serre fibration for any closed subgroup $H$ implies that the equivariant map $p:E\to B$ is a Serre $G$-fibration (see Theorem \ref{th-serr}). Since both $E$ and $B$ are $G$-CW-complexes, it follows from Theorem \ref{th-rassloenie} that the equivariant map $p:E\to B$ is a Hurewicz $G$-fibration in the class of compactly generated $G$-spaces. The theorem is proved.
\end{proof}

From the latter theorem we obtain the following positive answer to the problem of James and Segal in the class of compactly generated $G$-spaces and under the condition that both $E$ and $B$ are $G$-CW-complexes:

\begin{theorem}\label{th-nepodvij}
An equivariant map $p:E\to B$ of $G$-CW-complexes is a Hurewicz $G$-fibration in the class of compactly generated $G$-spaces if and only if the map $p^H : E^H \to B^H$ is a Hurewicz fibration for any closed subgroup $H$ of $G$.
\end{theorem}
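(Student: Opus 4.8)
The statement is a biconditional, and I would treat the two implications by quite different means, since almost all of the work has already been carried out. The plan is to dispatch the sufficiency direction (the ``if'') as an immediate consequence of Theorem \ref{th-nepodvij-serr}, and to prove the necessity direction (the ``only if'') by the classical device of equivariant path-lifting functions, restricted to fixed-point sets.

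For the sufficiency direction, suppose $p^H : E^H \to B^H$ is a Hurewicz fibration for every closed subgroup $H$. Every Hurewicz fibration is in particular a Serre fibration, so each $p^H$ is a Serre fibration. Since $E$ and $B$ are $G$-CW-complexes, Theorem \ref{th-nepodvij-serr} then applies verbatim and yields that $p:E\to B$ is a Hurewicz $G$-fibration in the class of compactly generated $G$-spaces. This is the whole of the nontrivial content of the implication, and it is exactly where the earlier machinery is being used: the embedding of a $G$-CW-complex as an equivariant retract of a simplicial $G$-complex (Theorem \ref{th-retract}) feeds, through Theorem \ref{th-rassloenie}, into Theorem \ref{th-nepodvij-serr}.

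For the necessity direction, I would assume $p$ is a Hurewicz $G$-fibration and first reformulate this property in terms of a lifting function. Working in compactly generated spaces, I would form the $G$-space $W_p = \{(e,\omega) \in E \times B^I : p(e) = \omega(0)\}$ with the diagonal $G$-action, where $G$ acts on the path space $B^I$ by $(g\omega)(t) = g\,\omega(t)$. Applying the equivariant covering homotopy property to the tautological data on $X = W_p$, namely $\widetilde{f}(e,\omega) = e$ and $F((e,\omega),t) = \omega(t)$, produces an equivariant lifting function $\Lambda : W_p \to E^I$ with $\Lambda(e,\omega)(0) = e$ and $p\circ\Lambda(e,\omega) = \omega$. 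Taking $H$-fixed points and using the natural identifications $(E^I)^H \cong (E^H)^I$ and $(W_p)^H \cong W_{p^H}$, the restriction $\Lambda^H$ is a lifting function for $p^H : E^H \to B^H$. From such a lifting function one reads off the covering homotopy property of $p^H$ with respect to an arbitrary space, so $p^H$ is a Hurewicz fibration.

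The two potential obstacles both lie in the necessity direction and are of a technical, categorical nature. The first is justifying that a Hurewicz $G$-fibration admits a genuine equivariant lifting function; this requires that the test space $W_p$ be an admissible $G$-space, which is where the standing compactly-generated hypothesis enters, so that $B^I$ and the pullback are formed in a convenient category and the tautological homotopy may be covered. The second is the commutation of the fixed-point functor with the path-space and pullback constructions, that is, the homeomorphisms $(E^I)^H \cong (E^H)^I$ and $(W_p)^H \cong W_{p^H}$; these hold because $(-)^H$ commutes with limits and with mapping spaces out of the trivial $H$-space $I$, but in the compactly generated setting they should be checked rather than taken for granted. Once these identifications are in hand, the argument is purely formal.
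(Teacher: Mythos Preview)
Your proposal is correct and matches the paper's approach: the paper derives Theorem~\ref{th-nepodvij} immediately from Theorem~\ref{th-nepodvij-serr} for the sufficiency direction, while the necessity direction is simply the known fact announced at the opening of Section~4 (that $p^H$ is a Hurewicz fibration whenever $p$ is a Hurewicz $G$-fibration), which the paper does not prove. You spell out the standard lifting-function argument for that necessity direction, correctly flagging the compactly-generated technicalities; this is more detail than the paper gives but not a different route.
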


The question remains open as to whether Theorem \ref{th-nepodvij} holds in the case when $E$ and $B$ are arbitrary $G$-spaces.

\end{document}